\newtheorem{theorem}{Theorem}
\newtheorem*{acknowledgement*}{Acknowledgement}
\newtheorem*{example*}{Example}
\newtheorem{lemma}[theorem]{Lemma}
\newtheorem{proposition}[theorem]{Proposition}
\newtheorem{remark}[theorem]{Remark}
\def\RRR{{\mathrm R}}
\def\Ric{{{Ric}}}
\begin{document}
\title[On a classification theorem for self--shrinkers]{On a classification theorem for self--shrinkers}

\begin{abstract}
We generalize a classification result for self--shrinkers of the mean curvature flow with nonnegative mean curvature, which was obtained in \cite{CoMi}, replacing the assumption on polynomial volume growth with a weighted $L^2$ condition on the norm of the second fundamental form. Our approach adopt the viewpoint of weighted manifolds and permits also to recover and to extend some others recent classification and gap results for self--shrinkers. 
\end{abstract}

\date{\today}

\author{Michele Rimoldi}
\address{Dipartimento di Scienza e Alta Tecnologia\\
Universit\`a degli Studi dell'Insubria\\
via Valleggio 11\\
I-22100 Como, ITALY} 
\email{michele.rimoldi@gmail.com}%

\subjclass[2000]{53C44, 53C21}
\keywords{Self--shrinkers, classification, weighted manifolds}

\maketitle


\section{Introduction}\label{Intro}
Let $M^m$ be a complete $m$--dimensional Riemannian manifold without boundary smoothly immersed by $x_0:M^m\to\mathbb{R}^{m+1}$ as a hypersurface in the Euclidean space $\mathbb{R}^{m+1}$. We say that $M_0=x_0(M)$ is moved along its mean curvature vector if there is a whole family $x(\cdot\,,\, t)$ of smooth immersions, with corresponding hypersurfaces $x_t=x(\cdot\,,\,t)(M)$, such that it satisfies the mean curvature flow initial value problem
\begin{equation}\label{MCF}
\begin{cases}
\frac{\partial}{\partial t}x(p,t)=-H(p,t)\nu(p,t)&p\in M^m\\
x(\cdot,\,0)=x_0.&
\end{cases} 
\end{equation}
Here $H(p,t)$ and $\nu(p,t)$ are respectively the mean curvature and the outer unit normal vector of the hypersurface $M_t$ at $x(p,t)$. The short time existence and uniqueness of a solution of \eqref{MCF} was investigated in classical works on quasilinear parabolic equations.
 
We are interested in the study of self--shrinking solutions of the flow \eqref{MCF}. A MCF $M_t$ is called a self--shrinking solution if it satifies
\[
\ M_t=\sqrt{-2t}M_{-\frac{1}{2}}
\]
For an overview on the role that such solutions play in the study of MCF see e.g. the introduction in \cite{CoMi}. A hypersurface is said to be a self--shrinker if it is the time $t=-\frac{1}{2}$ slice of a self--shrinking MCF. By \cite[Lemma 2.2]{CoMi} we will simply think of a self-shrinker as a hypersurface $x: M^m\to\mathbb{R}^{m+1}$ satisfying the following equation for the mean curvature $H$ and the (outer) unit normal $\nu$
\begin{equation}\label{SS}
H=\left\langle x,\nu\right\rangle.
\end{equation}

\emph{In this note we will consider only the codimension one case.}

U. Abresch and J. Langer, \cite{AL}, completely classified immersed self--shrinkers of dimension one. These locally mean convex curves are now called Abresch--Langer curves. Moreover they also showed that the circle is the only simple closed self--shrinking curve. This was later generalized to higher dimension by G. Huisken, \cite{Hu1}, who showed that the only smooth closed self--shrinkers $M^m\to\mathbb{R}^{m+1}$ with nonnegative mean curvature are round spheres.

In the complete non--compact case, G. Huisken, \cite{Hu3}, also proved a classification theorem saying that the only possible smooth self--shrinkers $M^m\to\mathbb{R}^{m+1}$ with nonnegative mean curvature, bounded norm of the second fundamental form $A$ and polynomial volume growth are isometric to $\Gamma\times\mathbb{R}^{m-1}$ or $\mathbb{S}^k(\sqrt{k})\times\mathbb{R}^{m-k}$ ($0\leq k\leq m$), where $\Gamma$ is an Abresch--Langer curve. Asking also embeddedness one can rule out the product of immersed Abresch--Langer curves with Euclidean factors. 

Recently Colding and Minicozzi, \cite[Section 10]{CoMi} showed that the hypothesis $|A|$ bounded can be dropped. Indeed they obtain the following
\begin{theorem}\label{ThCoMi}(Theorem 0.17 in \cite{CoMi}). 
$\mathbb{S}^k(\sqrt{k})\times\mathbb{R}^{m-k}$ $(0\leq k\leq m)$ are the only smooth complete embedded self--shrinkers without boundary, with polynomial volume growth, and $H\geq 0$ in $\mathbb{R}^{m+1}$.
\end{theorem}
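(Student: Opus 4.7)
I would follow the weighted-manifolds viewpoint emphasized in this paper. Equip $M$ with the Gaussian measure $e^{-|x|^{2}/4}d\mu_{g}$ and the associated drift Laplacian $\mathcal{L}=\Delta-\tfrac{1}{2}\langle x^{T},\nabla\cdot\rangle$, which is formally self-adjoint on $L^{2}(e^{-|x|^{2}/4}d\mu_{g})$. Polynomial volume growth makes the weighted measure finite and provides the slack needed to tame boundary terms in cutoff integrations by parts via the exponential decay of the weight.

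The first step is to rule out interior zeros of $H$. A direct calculation using the shrinker equation \eqref{SS} yields a linear second-order elliptic equation of the form $\mathcal{L}H+cH=0$, where $c$ is a smooth function built from $|A|^{2}$. Since $H\geq 0$ and $c$ is smooth (hence bounded on compact subdomains), the strong maximum principle gives the dichotomy $H\equiv 0$ or $H>0$ on all of $M$. In the former case \eqref{SS} reduces to $\langle x,\nu\rangle\equiv 0$, so $M$ is a smooth complete embedded cone with vertex at the origin, necessarily a hyperplane through $0$; this settles the case $k=0$.

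Assume henceforth $H>0$ and set $\phi:=|A|^{2}/H^{2}$. The heart of the argument is to show $\phi$ is constant on $M$. The analytic ingredients are the shrinker analogue of Simons' identity, $\mathcal{L}|A|^{2}=2|A|^{2}(1-|A|^{2})+2|\nabla A|^{2}$, the equation for $H$ from the previous step, and the refined Kato inequality $|\nabla A|^{2}\geq\bigl|\nabla|A|\bigr|^{2}$. Combining these via the weighted product rule produces a divergence identity for $\phi$; integrating against a cutoff $\eta_{R}^{2}$ (with $\eta_{R}\equiv 1$ on $B_{R}\cap M$ and supported in $B_{2R}\cap M$) and the Gaussian weight yields an estimate of the schematic form
\[
\int_{M}\eta_{R}^{2}\bigl(|\nabla A|^{2}-\phi\,|\nabla H|^{2}\bigr)e^{-|x|^{2}/4}d\mu_{g}\;\leq\;\frac{C}{R^{2}}\int_{B_{2R}\setminus B_{R}}\phi^{2}H^{2}e^{-|x|^{2}/4}d\mu_{g}.
\]
Polynomial volume growth makes the right-hand side tend to $0$ as $R\to\infty$, while the integrand on the left is pointwise nonnegative by Kato and Cauchy--Schwarz. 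One concludes $\phi\equiv\mathrm{const}$ and simultaneously $|\nabla A|^{2}\equiv\bigl|\nabla|A|\bigr|^{2}$.

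Constancy of $\phi$ together with saturation of the Kato inequality forces the shape operator to split into parallel eigendistributions, so by the de Rham decomposition theorem and the completeness of $M$, one obtains an isometric splitting $M\cong N^{k}\times\RR^{m-k}$ with $N^{k}\subset\RR^{k+1}$ a complete embedded mean-convex self-shrinker on which $|A_{N}|^{2}/H_{N}^{2}=1/k$ and $|A_{N}|$ is bounded. Huisken's rigidity theorem then identifies $N^{k}$ as $\SS^{k}(\sqrt{k})$, and embeddedness excludes the non-circular Abresch--Langer factors when $k=1$. The principal obstacle in this approach is the cutoff integration-by-parts step: without a priori pointwise control on $|A|$, finiteness of $\int|A|^{4}H^{-2}e^{-|x|^{2}/4}d\mu_g$ is far from automatic, and this is exactly the step that the present paper refines by imposing a weighted $L^{2}$ condition on $|A|$.
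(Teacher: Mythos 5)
Your proposal is, in outline, a reconstruction of the original Colding--Minicozzi argument, and that is a genuinely different route from the one this paper takes. Here the theorem is not reproved directly: it is obtained as a special case of Theorem \ref{LiouvilleAppl}, whose proof runs through the abstract Liouville-type Theorem \ref{Liouville} for weighted manifolds. Concretely, after the same strong-minimum-principle dichotomy you invoke ($H\equiv 0$, giving a hyperplane through the origin, or $H>0$), the paper feeds the pair of equations \eqref{Id1} and \eqref{Simon1} into Theorem \ref{Liouville} with $u=|A|$, $v=H$, $a=|A|^2-1$, $\delta=\beta=1$; the only integrability input is $|A|\in L^{2}(e^{-|x|^{2}/2}d\mathrm{vol})$, and the conclusion is exactly your two key identities, $|A|=CH$ and $|\nabla A|^{2}=|\nabla|A||^{2}$, after which both arguments conclude identically (splitting, Lawson/Huisken, embeddedness to exclude non-circular Abresch--Langer factors). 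The trade-off is clear: your cutoff integration-by-parts is self-contained and stays close to the geometry, while the paper's route isolates the minimal analytic hypothesis (a single weighted $L^{2}$ bound on $|A|$) and thereby yields the stronger Theorem \ref{LiouvilleAppl}, of which the stated theorem is a corollary via Proposition 10.14 of \cite{CoMi}.

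Two caveats. First, the step you yourself flag as the ``principal obstacle'' is a genuine gap as written: the schematic cutoff inequality is useless unless the weighted integrals $\int|A|^{2}e^{-|x|^{2}/4}$, $\int|\nabla A|^{2}e^{-|x|^{2}/4}$, etc., are known to be finite, and this does not follow from polynomial volume growth alone without an argument. It is supplied in \cite{CoMi} (Lemma 10.11 and Proposition 10.14) by first integrating the equation for $\log H$ against the Gaussian weight, using $H>0$ in an essential way; your sketch should either carry this out or cite it, since otherwise the passage to the limit $R\to\infty$ is not justified. Second, your normalizations are mixed: with the weight $e^{-|x|^{2}/4}$ and drift $\tfrac12\langle x^{T},\nabla\cdot\rangle$ the shrinker equation is $H=\tfrac12\langle x,\nu\rangle$ and the Simons-type identity is not $\mathcal{L}|A|^{2}=2|A|^{2}(1-|A|^{2})+2|\nabla A|^{2}$ (that form, as in \eqref{Id2}, belongs to the convention $H=\langle x,\nu\rangle$ with weight $e^{-|x|^{2}/2}$ used in this paper). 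This is cosmetic but should be fixed for the constants in the dichotomy and in the final identification of the radius $\sqrt{k}$ to come out right.
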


The hypothesis of polynomial volume growth in \cite{CoMi} is used to show that various weighted integrals converge in order to justify various integration by parts. This assumption is natural in the study of the singularities that a MCF goes through since any time--slice of a blow up of a closed MCF has polynomial volume growth (see e.g. Corollary 2.13 in \cite{CoMi}). Nevertheless, looking only at the self--shrinker equation \eqref{SS}, it might be thought under what weaker conditions the conclusion in Theorem \ref{ThCoMi} still holds. In the main result of this note we replace polynomial volume growth with a weighted $L^2$--condition on the norm of the second fundamental form. Note that, in case $H>0$, the polynomial volume growth assumption impies that $|A|\in L^2(e^{-\frac{|x|^2}{2}}d\rm{vol})$ (see Proposition 10.14 in \cite{CoMi}). 
\begin{theorem}\label{LiouvilleAppl}
$\mathbb{S}^k(\sqrt{k})\times\mathbb{R}^{m-k}$ $(0\leq k\leq m)$ are the only smooth complete embedded self--shrinkers without boundary in $\mathbb{R}^{m+1}$ with $H\geq 0$ and $|A|\in L^2( e^{-\frac{|x|^2}{2}}d\rm{vol})$.
\end{theorem}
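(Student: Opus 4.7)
The strategy is to translate Colding--Minicozzi's argument into the language of weighted manifolds so that the polynomial volume growth used in \cite{CoMi} to justify integration by parts is systematically replaced by the weighted $L^{2}$ hypothesis on $\lvert A\rvert$. Concretely, I view $M$ as the weighted manifold $(M,g,d\mu)$ with $d\mu := e^{-\lvert x\rvert^{2}/2}\,d\mathrm{vol}$, equipped with the drift Laplacian $\mathcal{L}u := \Delta u - \langle x,\nabla u\rangle$, which is formally self--adjoint with respect to $d\mu$. The hypothesis $\lvert A\rvert\in L^{2}(d\mu)$ is then precisely the integrability required to run cutoff/integration--by--parts arguments in this weighted setting.

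The proof splits into two cases. On any self--shrinker $H$ satisfies a linear elliptic equation of the form $\mathcal{L}H + a(x) H = 0$ with locally bounded coefficient $a$ (obtained by differentiating $H = \langle x,\nu\rangle$ and using Simons' identity); applying the strong maximum principle to $H\geq 0$ yields either $H\equiv 0$ or $H>0$ on all of $M$. In the first case the shrinker equation reduces to $\langle x,\nu\rangle\equiv 0$, so $M$ is a smooth complete embedded minimal cone through the origin, hence a hyperplane, which is the $k=0$ case of the conclusion. In the second case I consider the globally smooth positive function $w := \lvert A\rvert/H$. Combining the expressions for $\mathcal{L}\log H$ and $\mathcal{L}\log\lvert A\rvert$ coming from the shrinker equation and Simons' identity, and using Kato's inequality $\lvert\nabla A\rvert^{2}\geq\lvert\nabla\lvert A\rvert\rvert^{2}$, one obtains a differential inequality of the schematic form $w\,\mathcal{L}w\geq C\lvert\nabla w\rvert^{2}$ on $M$, which is the weighted counterpart of the central computation of \cite{CoMi}.

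The heart of the proof is then a weighted Liouville--type result: if $v\in L^{2}(d\mu)$ satisfies $v\,\mathcal{L}v\geq 0$ weakly on $(M,d\mu)$, then $v$ is constant. I would establish this by multiplying the inequality by $\eta^{2}$ for a cutoff $\eta$ depending only on $\lvert x\rvert$, integrating by parts against $d\mu$, and letting $\eta\nearrow 1$, exploiting the Gaussian decay of $d\mu$ itself in place of any volume--growth estimate. Applied to $v = w$ this forces $\lvert A\rvert/H$ to be constant, after which the classical rigidity argument of Huisken identifies $M$ with $\mathbb{S}^{k}(\sqrt{k})\times\mathbb{R}^{m-k}$.

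The main obstacle is that the integration by parts naturally produces terms involving $\int \lvert A\rvert^{4}\,d\mu$ or $\int\lvert\nabla A\rvert^{2}\,d\mu$, which are not a priori controlled by the sole hypothesis $\lvert A\rvert\in L^{2}(d\mu)$. The delicate point is therefore to organize the identities so that each higher order occurrence of $\lvert A\rvert$ is multiplied by a uniformly bounded factor (such as $1/H^{2}$ on $\{H>0\}$, absorbed into $w$), reducing every integrand back to a quantity dominated by $\lvert A\rvert^{2}$. This is precisely the step at which the weighted manifold viewpoint, together with the chosen inequality for $w$, replaces polynomial volume growth.
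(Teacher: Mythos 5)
Your overall strategy (the weighted--manifold viewpoint, the dichotomy $H\equiv 0$ versus $H>0$ via the strong maximum principle applied to $\mathcal{L}H=H(1-|A|^2)$, and a Liouville theorem for the quotient $|A|/H$) matches the paper's, but the central step as you state it has a genuine gap. Your Liouville lemma requires $w=|A|/H\in L^2(e^{-|x|^2/2}d\mathrm{vol})$, whereas the hypothesis only gives $|A|\in L^2(e^{-|x|^2/2}d\mathrm{vol})$; since $H>0$ carries no positive lower bound, there is no way to pass from integrability of $|A|$ to integrability of $|A|/H$. Your suggested remedy --- that each higher order occurrence of $|A|$ is multiplied by a ``uniformly bounded factor such as $1/H^{2}$'' --- fails precisely because $1/H^{2}$ is not uniformly bounded on $\{H>0\}$. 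A second, related inaccuracy: $w$ is not subharmonic for $\mathcal{L}$ itself; the correct statement is $\mathcal{L}w+2\langle\nabla\log H,\nabla w\rangle\geq 0$, i.e.\ $\Delta_h w\geq 0$ for the drift Laplacian associated with the modified weight $h=\frac{|x|^2}{2}-2\log H$, and your schematic inequality $w\,\mathcal{L}w\geq C|\nabla w|^2$ hides this extra drift term, which cannot be discarded in the integration by parts. (Also, $|A|/H$ is in general only locally Lipschitz at zeros of $|A|$, not globally smooth and positive.)

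The paper resolves both difficulties simultaneously by changing the measure together with the operator. By Lemma \ref{computation}, with $h=-\log H^{2}+\frac{|x|^2}{2}$ one has both $\Delta_h(|A|/H)\geq 0$ and
\[
\left\Vert \frac{|A|}{H}\right\Vert_{L^{2}(e^{-h}d\mathrm{vol})}=\left\Vert |A|\right\Vert_{L^{2}(e^{-|x|^2/2}d\mathrm{vol})},
\]
because $e^{-h}d\mathrm{vol}=H^{2}e^{-|x|^2/2}d\mathrm{vol}$. Thus the hypothesis $|A|\in L^{2}(e^{-|x|^2/2}d\mathrm{vol})$ is exactly the $L^{2}(e^{-h})$ condition needed to apply the $L^{2}$ Liouville theorem for nonnegative $\Delta_h$--subharmonic functions on the complete weighted manifold $(M,e^{-h}d\mathrm{vol})$ (which, as you correctly note, needs no volume growth assumption). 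This yields $|A|=CH$ and, from the equality analysis in the case $\delta=1$, the identity $|\nabla A|^{2}=|\nabla|A||^{2}$; these two facts are then fed into the Huisken/Colding--Minicozzi rigidity argument exactly as you indicate. Your plan becomes correct once the Liouville step is run with respect to the measure $H^{2}e^{-|x|^2/2}d\mathrm{vol}$ rather than $e^{-|x|^2/2}d\mathrm{vol}$.
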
 

The viewpoint of weighted manifold adopted in the proof of Theorem \ref{LiouvilleAppl} permit also to recover easily a classification result by H. D. Cao and H. Li, \cite{CL} (actually they prove this result in arbitrary codimension). Unlike Theorem \ref{LiouvilleAppl}, in this result it is considered a $L^\infty$--type condition on the norm of the second fundamental form and the mean curvature is no longer assumed to be nonnegative.
\begin{theorem}\label{ThCaoLi}(Theorem 1.1 in \cite{CL})
If $M^m\to\mathbb{R}^{m+1}$ is an $m$--dimensional complete self--shrinker without boundary and with polynomial volume growth, and $|A|^2\leq 1$ then $M$ is of the form $\mathbb{S}^k(\sqrt{k})\times\mathbb{R}^{m-k}$ for some $0\leq k\leq m$.
\end{theorem}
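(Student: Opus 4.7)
The strategy I would adopt is to interpret $M$ as the weighted manifold $(M, g, e^{-|x|^2/2}\, d\mathrm{vol})$ and to apply a weighted vanishing argument to $|A|^2$. The starting point is the Simons-type identity of Colding--Minicozzi,
$$\mathcal{L}|A|^2 = 2|A|^2(1-|A|^2) + 2|\nabla A|^2,$$
valid on any self-shrinker, where $\mathcal{L} = \Delta - \langle x, \nabla\,\cdot\,\rangle$ is the natural symmetric drift Laplacian on the weighted manifold. Under the hypothesis $|A|^2 \leq 1$ the right-hand side is pointwise nonnegative, so $|A|^2$ is $\mathcal{L}$-subharmonic.

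The next step is to pair this identity with a radial cutoff $\phi^2$, where $\phi = \eta(|x|/R)$ and $\eta \in C_c^\infty([0,2))$ satisfies $\eta \equiv 1$ on $[0,1]$. Weighted integration by parts, combined with $|\nabla|A|^2| \leq 2|A||\nabla A|$ and a Cauchy--Schwarz absorption, yields the Caccioppoli-type estimate
$$\int_M \phi^2 |\nabla A|^2\, e^{-\frac{|x|^2}{2}}\,d\mathrm{vol} + 2\int_M \phi^2 |A|^2(1-|A|^2)\, e^{-\frac{|x|^2}{2}}\,d\mathrm{vol} \leq 4\int_M |\nabla\phi|^2 |A|^2\, e^{-\frac{|x|^2}{2}}\,d\mathrm{vol}.$$
The polynomial volume growth hypothesis guarantees that $e^{-|x|^2/2}\, d\mathrm{vol}$ is a finite measure on $M$, and the standard estimate $|\nabla\phi|^2 \leq C/R^2$ together with $|A|^2 \leq 1$ forces the right-hand side to vanish as $R\to\infty$.

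Passing to the limit yields $\nabla A \equiv 0$ and $|A|^2(1-|A|^2)\equiv 0$ pointwise on $M$. By continuity and connectedness of $M$, either $|A|\equiv 0$, so that $M$ is totally geodesic and, by \eqref{SS} applied with $H\equiv 0$, a hyperplane through the origin (the $k=0$ case); or $|A|\equiv 1$. In the latter case $A$ is a nonzero parallel shape operator on a complete hypersurface of $\RR^{m+1}$, and the classical Lawson/Ryan classification of such hypersurfaces forces $M$ to split isometrically as $\SS^k(r)\times\RR^{m-k}$ for some $1\leq k\leq m$ and some $r>0$. Evaluating \eqref{SS} on the sphere factor then pins down $r=\sqrt{k}$.

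The main technical obstacle I anticipate is justifying the integration by parts, i.e.\ showing that the boundary-at-infinity contributions really vanish under only polynomial volume growth. This is precisely where the hypothesis $|A|^2 \leq 1$ is used in full force: it guarantees both that $|A|^2 \in L^1(e^{-|x|^2/2}\, d\mathrm{vol})$ and that the cutoff error term decays to zero, so that the Caccioppoli estimate above actually closes up in the limit. The subsequent geometric classification in the $|A|\equiv 1$ case is classical and can be quoted.
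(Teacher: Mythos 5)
Your argument is correct, and it reaches the conclusion by a more hands-on route than the paper. The paper's proof observes that polynomial volume growth gives $\mathrm{vol}_{\frac{|x|^2}{2}}(M)<+\infty$, hence $\frac{|x|^2}{2}$--parabolicity, and then applies the parabolicity black box to the \emph{bounded} $\mathcal{L}$--subharmonic function $|A|$ (subharmonicity coming from \eqref{Simon1} together with $|A|^2\leq 1$); constancy of $|A|$ is then fed back into \eqref{Simon1} to get $|\nabla A|=0$, and Lawson's theorem \cite{La} finishes. You instead unpack the parabolicity mechanism explicitly: you run the Caccioppoli/cutoff argument directly on $|A|^2$ using \eqref{Id2}, and because you keep the good zeroth--order term $2|A|^2(1-|A|^2)$ on the left you obtain $\nabla A\equiv 0$ and the dichotomy $|A|\equiv 0$ or $|A|\equiv 1$ in a single pass, with no need to substitute back. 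Your constants check out ($4ab\leq a^2+4b^2$ with $a=\phi|\nabla A|$, $b=|\nabla\phi|\,|A|$ gives exactly your estimate), and the right-hand side indeed tends to zero since $|A|^2\leq 1$, $|\nabla\phi|\leq C/R$, and the weighted volume is finite. What the paper's formulation buys is uniformity: the same parabolicity statement is reused verbatim for Proposition \ref{ImpChengPeng}, where finiteness of the weighted volume is replaced by the Bakry--Emery curvature bound \eqref{BoundRicf}; your explicit cutoff computation would not transfer to that setting without modification. One small technical caveat in your write-up: the cutoff $\phi=\eta(|x|/R)$ is compactly supported on $M$ only if the immersion is proper, which is not among the stated hypotheses; to be safe you should take $\phi=\eta(r(x)/R)$ with $r$ the intrinsic distance, which is compactly supported by completeness, and note that your bound on the error term only uses $|\nabla\phi|\leq C/R$ and $\mathrm{vol}_{\frac{|x|^2}{2}}(M)<+\infty$, so nothing else changes.
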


As a side product of our techniques, we shall provide a straightforward proof of this result.

Moreover it was observed in \cite{CL} that, as a direct consequence of Theorem \ref{ThCaoLi}, one can formulate a gap result for $|A|$ saying that if a self--shrinker has polynomial volume growth and $|A|^2<1$ then $M$ is a hyperplane in $\mathbb{R}^{m+1}$, thus recovering a result in \cite{LeSe}. By making use of the validity of the full Omori--Yau maximum principle for the suitable weighted Laplacian on a complete self--shrinker when $|A|$ is bounded, Q.--M. Cheng and Y. Peng, \cite{CP} obtained the same conclusion without assuming the polynomial volume growth but asking the stronger condition
$\sup|A|^2<1$ to hold.
This result can be improved with the following
\begin{proposition}\label{ImpChengPeng}
Let $M^m\to\mathbb{R}^{m+1}$ be a complete self--shrinker without boundary. Denote by $r(x)$ the geodesic distance from a fixed origin and assume that
\begin{equation}\label{Contr4f-par}
|A|^2\leq 1- C (1+r(x))^{-\mu}
\end{equation}
for some constants $0<C\leq1$ and $0\leq\mu\leq1$. Then $M$ is a hyperplane in $\mathbb{R}^{m+1}$.
\end{proposition}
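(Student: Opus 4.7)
My plan is to combine the Colding--Minicozzi identity, valid on any self--shrinker,
\[
L|A|^2 = 2|A|^2(1-|A|^2) + 2|\nabla A|^2,
\]
where $L = \Delta - \langle x, \nabla\cdot\rangle$ is the drift Laplacian associated to the weight $e^{-|x|^2/2}$, with a weighted Caccioppoli--type estimate built around a test function that exploits the strict positivity of $v := 1-|A|^2$. The quantitative hypothesis plays two roles in the argument: the lower bound $v \geq C(1+r)^{-\mu} > 0$ keeps $v$ strictly positive on all of $M$, so that $v^{-1}$ is available as a factor in a test function; while the $L^\infty$ bound $|A|\leq 1$ places us in a regime where, by known results on complete self--shrinkers with bounded second fundamental form, the weighted volume $\int_M e^{-|x|^2/2}\, d\mathrm{vol}$ is finite.

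Rewriting the identity in the form $Lv = -2|A|^2 v - 2|\nabla A|^2$, the idea is to test against $\phi = v^{-1}\eta^2$ for a cutoff $\eta \in C_c^\infty(M)$ and to apply the self--adjointness of $L$ on the weighted manifold. Expanding $\nabla(v^{-1}\eta^2) = -v^{-2}\eta^2\nabla v + 2v^{-1}\eta\nabla\eta$ and absorbing the cross term $2(\eta/v)\langle\nabla v, \nabla\eta\rangle$ via Young's inequality makes the $\eta^2|\nabla v|^2/v^2$ contributions cancel, leaving the Caccioppoli--type bound
\[
2\int_M |A|^2\,\eta^2\, e^{-|x|^2/2}\,d\mathrm{vol} + 2\int_M \frac{|\nabla A|^2}{v}\,\eta^2\, e^{-|x|^2/2}\,d\mathrm{vol} \leq \int_M |\nabla \eta|^2\, e^{-|x|^2/2}\,d\mathrm{vol}.
\]
Picking $\eta = \eta_R$ equal to $1$ on the intrinsic ball $B_R$, supported in $B_{2R}$, with $|\nabla \eta_R|\leq 2/R$, the right--hand side is dominated by $(4/R^2)\int_M e^{-|x|^2/2}\,d\mathrm{vol}$, which tends to $0$ as $R\to\infty$ by finiteness of the weighted volume. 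In the limit, both nonnegative terms on the left--hand side vanish, so $|A|\equiv 0$ on $M$, and completeness then forces $M$ to be a hyperplane in $\mathbb{R}^{m+1}$.

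The main obstacle is justifying this cutoff limit without an a priori polynomial volume growth assumption: everything rests on the finiteness of the weighted volume for a complete self--shrinker that is not assumed to be properly immersed. This is the weighted--manifold analogue of the polynomial volume growth hypothesis used in Cao--Li's Theorem \ref{ThCaoLi}; it is to be handled either by invoking external results linking bounded $|A|$ to properness of self--shrinkers (and hence to polynomial weighted volume growth), or by a direct weighted--volume estimate on $M$ in which the quantitative rate $\mu\leq 1$ naturally enters.
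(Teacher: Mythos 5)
Your Caccioppoli computation is internally correct (the identity \eqref{Id2}, the test function $v^{-1}\eta^2$ with $v=1-|A|^2>0$, and the cancellation via Young's inequality all check out), but the argument has a genuine gap exactly where you flag it: the passage to the limit requires $\int_M e^{-|x|^2/2}d\mathrm{vol}<+\infty$, and nothing in the hypotheses delivers this. The claim that bounded $|A|$ on a complete self--shrinker forces finite weighted volume is not a known result and cannot be invoked; indeed, if it were, the Cheng--Peng theorem cited in the introduction would follow trivially from Cao--Li, whereas the whole point of that line of results is precisely to avoid any volume/properness assumption. Moreover, the quantitative rate $(1+r)^{-\mu}$ with $0\leq\mu\leq 1$ plays no role in your argument beyond keeping $v>0$ --- a warning sign, since that restriction on $\mu$ must be used somewhere. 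Where it is actually needed is the step you are missing: by \eqref{BoundRicf} the hypothesis \eqref{Contr4f-par} gives $\Ric_f\geq 1-|A|^2\geq C(1+r)^{-\mu}$ with $f=\frac{|x|^2}{2}$, and by the Qian--Wei--Wylie weighted volume comparison (condition (b) of Remark \ref{Condf-par}) this yields $\mathrm{vol}_f(\partial B_r)^{-1}\notin L^1(+\infty)$, hence $f$--parabolicity --- which is strictly weaker than finite weighted volume (only case (a), $\mu=0$, gives $\mathrm{vol}_f(M)<+\infty$) but suffices.

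Two ways to close your argument. Either abandon the cutoff scheme and argue as the paper does: once $M$ is $\frac{|x|^2}{2}$--parabolic, \eqref{Simon1} together with $|A|^2\leq 1$ shows $|A|$ is a bounded $\mathcal{L}$--subharmonic function, hence constant, and then \eqref{Simon1} combined with the strict inequality $|A|^2-1\leq -C(1+r)^{-\mu}<0$ forces $|A|\equiv 0$. Or keep your Caccioppoli inequality but replace the cutoffs $\eta_R$ with $|\nabla\eta_R|\leq 2/R$ by the null--capacity exhaustion characterizing $f$--parabolicity, i.e.\ compactly supported $\eta_k\to 1$ with $\int_M|\nabla\eta_k|^2e^{-|x|^2/2}d\mathrm{vol}\to 0$; this makes your right--hand side vanish without any volume finiteness and yields $|A|\equiv 0$ directly. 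In either case the indispensable new ingredient is the computation of the Bakry--\'Emery tensor $\Ric_f=\delta_{ij}-h_{il}h_{lj}$ and the resulting parabolicity, which your proposal never touches.
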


\section{Basic equations}
We will use the notation of \cite{Hu1, Hu2}.

{\em In all of this note the Einstein convention of summing over repeated indices from $1$ to $m$ will be adopted.}

Let $x:M^m\to\mathbb{R}^{m+1}$ be a generic hypersurface smoothly immersed in $\mathbb{R}^{m+1}$ and let us consider an orthonormal frame $\left\{e_i\right\}$ of $M$.
The coefficients of the second fundamental form $A$ are defined to be
\begin{equation}\label{SecFundForm}
h_{ij}=-\left\langle \nabla_ie_j,\nu\right\rangle.
\end{equation}
In particular we have 
\begin{equation}\label{SecFundForm2} 
\nabla_i\nu=h_{ij}e_j. 
\end{equation}
Since $\left\langle \nabla_\nu\nu,\nu\right\rangle=0$ we have that the mean curvature $H=\mathrm{div} \nu$ is given by $$H=\left\langle \nabla_i\nu, e_i\right\rangle=h_{ii}.$$ The mean curvature vector is defined to be $\mathbf{H}=-H\nu$.

The Riemann curvature tensor, the Ricci tensor and the scalar curvature are given by Gauss' equation
\begin{eqnarray}
\RRR_{ijkl}&=&h_{ik}h_{jl}-h_{il}h_{jk}\label{Riem}\\
\RRR_{ik}&=&Hh_{ik}-h_{il}h_{lk}\label{Ric}\\
\RRR&=&H^2-|A|^2.\label{R}
\end{eqnarray}
Using Codazzi's equations
\begin{equation}\label{Codazzi}
\nabla_ih_{kl}=\nabla_kh_{il}=\nabla_{l}h_{ik},
\end{equation}
and commutation formulas for the interchange of two covariant derivatives combined with \eqref{Riem}, we obtain the validity of the following well-known identity
\begin{equation}
\Delta h_{ij}=\nabla_i\nabla_j H+Hh_{il}h_{lj}-|A|^2
h_{ij},\label{Sim1}
\end{equation}
which, contracted with $h_{ij}$, gives Simon's identity
\begin{equation}\label{Sim2}
\frac{1}{2}\Delta|A|^2= h_{ij}\nabla_i\nabla_j H+|\nabla A|^2+2H\mathrm{tr}(A^3)-2|A|^4.
\end{equation}
We will also need the following well--known inequality 
\begin{equation}\label{KatoA}
|\nabla|A||^2\leq|\nabla A|^2.
\end{equation}
Finally, let us denote by $\mathcal{L}$ the linear elliptic operator defined on $v\in C^{\infty}(M)$ by
\begin{equation}\label{Operator}
\mathcal{L}v=\Delta v-\left\langle x, \nabla v\right\rangle=e^{\frac{|x|^2}{2}}\mathrm{div}(e^{-\frac{|x|^2}{2}}\nabla v),
\end{equation}
where $\Delta$, $\nabla$, $\mathrm{div}$ are respectively the Laplacian, the gradient, and the divergence on the hypersurface $M$. The operator $\mathcal{L}$ is clearly a symmetric operator on $L^2(M, e^{-\frac{|x|^2}{2}}d\rm{vol})$ and plays a very important role (also) in the study of self-shrinkers; for more details see \cite{CoMi}. Note that using well-known notations we have that \[
\mathcal{L}=\Delta_{x^T}=\Delta_{\frac{|x|^2}{2}},
\]
where $x^T$ is the tangential projection of $x$.
In the next lemma we collect two basic identities for self--shrinkers, which can be found in the proof of Theorem 4.1 in \cite{Hu2}.
\begin{lemma}\label{Equations}
Let $x:M^m\to\mathbb{R}^{m+1}$ be a smoothly immersed self-shrinker in $\mathbb{R}^{m+1}$. then the following identities hold:
\begin{eqnarray}
\mathcal{L}H&=&H(1-|A|^2)\label{Id1}\\
\mathcal{L}|A|^2&=&2|\nabla A|^2+2|A|^2-2|A|^4\label{Id2}
\end{eqnarray}
\end{lemma}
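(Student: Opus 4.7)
The plan is to derive both identities directly from the self--shrinker equation $H=\langle x,\nu\rangle$ together with the standard structural formulas for a hypersurface in $\mathbb{R}^{m+1}$.

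\medskip

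\noindent\textbf{Step 1: Tangential gradient of $H$.}
First I would differentiate $H=\langle x,\nu\rangle$ using $\nabla_i x=e_i$, $\nabla_i\nu=h_{ij}e_j$, and $\langle e_i,\nu\rangle=0$. This gives the key identity
\[
\nabla_j H= h_{jk}\langle x, e_k\rangle,
\]
which is the only place where \eqref{SS} is used; both \eqref{Id1} and \eqref{Id2} will follow from it through the basic Hessian/Laplacian calculus on $M$.

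\medskip

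\noindent\textbf{Step 2: Proof of \eqref{Id1}.}
I would next compute $\Delta H$ by regarding the position vector $x:M\to\mathbb{R}^{m+1}$ and the unit normal $\nu$ as $\mathbb{R}^{m+1}$-valued functions on $M$ and applying the product rule to $H=\langle x,\nu\rangle$. Using the standard relations
\[
\Delta x=-H\nu,\qquad \Delta \nu=\nabla H-|A|^2\nu,\qquad \langle\nabla_i x,\nabla_i\nu\rangle=\langle e_i,h_{ij}e_j\rangle=H,
\]
one obtains
\[
\Delta H=-H+2H+\langle x,\nabla H\rangle -|A|^2 H = \langle x,\nabla H\rangle + H(1-|A|^2).
\]
Subtracting $\langle x,\nabla H\rangle$ and invoking the definition \eqref{Operator} of $\mathcal{L}$ immediately yields \eqref{Id1}.

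\medskip

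\noindent\textbf{Step 3: Proof of \eqref{Id2}.}
For the second identity I would start from Simon's identity \eqref{Sim2}, so the task is reduced to understanding the term $h_{ij}\nabla_i\nabla_j H$. Applying $\nabla_i$ to the formula of Step 1 and using $\nabla_i\langle x,e_k\rangle=\delta_{ik}-H h_{ik}$ (because $\nabla_i e_k = -h_{ik}\nu$ in the ambient derivative) yields
\[
\nabla_i\nabla_j H = (\nabla_i h_{jk})\langle x,e_k\rangle + h_{ij} - H\, h_{ik}h_{jk}.
\]
Contracting with $h_{ij}$, the first term becomes $h_{ij}(\nabla_k h_{ij})\langle x,e_k\rangle = \tfrac{1}{2}\langle x,\nabla|A|^2\rangle$ by Codazzi \eqref{Codazzi}. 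Substituting into \eqref{Sim2}, the $H\,\mathrm{tr}(A^3)$ and $|A|^4$ contributions cancel in the right proportion, leaving
\[
\Delta|A|^2=\langle x,\nabla|A|^2\rangle + 2|\nabla A|^2 + 2|A|^2 - 2|A|^4,
\]
which is exactly \eqref{Id2}.

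\medskip

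\noindent\textbf{Main obstacle.}
The computations are structurally straightforward and the only real pitfall is book\-keeping: correctly using Codazzi to turn the mixed term $h_{ij}\nabla_i h_{jk}$ into $\tfrac12\nabla_k|A|^2$, and keeping track of signs in $\nabla_i\nabla_j H$ so that the cubic curvature terms from Simon's identity cancel against those coming from differentiating $\nabla_j H=h_{jk}\langle x,e_k\rangle$. Once these are handled, both identities drop out without further input.
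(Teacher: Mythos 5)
Your proposal is correct and follows essentially the same route as the paper: differentiate $H=\langle x,\nu\rangle$ to get $\nabla_j H=h_{jk}\langle x,e_k\rangle$, differentiate once more to obtain the Hessian of $H$, and feed its trace (respectively, its contraction with $h_{ij}$ via Simons' identity) into the definition of $\mathcal{L}$. The only cosmetic difference is in your Step 2, where you compute $\Delta H$ by the ambient product rule using $\Delta x=-H\nu$ and $\Delta\nu=\nabla H-|A|^2\nu$ rather than tracing the Hessian formula \eqref{HessH}; both yield $\Delta H=H-H|A|^2+\langle x,\nabla H\rangle$.
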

\begin{proof}
Differentiating \eqref{SS} and using \eqref{SecFundForm2} and the fact that $\nabla_i x=e_i$ for every $i=1,\ldots, n$, we obtain
\[
\ \nabla_i H= \left\langle x,e_j\right\rangle h_{ij}.
\]
working at a point $p$ and choosing the frame $e_i$ so that $\nabla^{T}_ie_j(p)=0$, differentiating again gives at $p$ that
\begin{equation}\label{HessH}
\nabla_i\nabla_j H=h_{ij}-Hh_{il}h_{jl}+\left\langle x, e_l\right\rangle\nabla_l h_{ij}.
\end{equation}
Tracing, we thus get
\[
\ \Delta H= H-H|A|^2+\left\langle x, e_l\right\rangle\nabla_l H,
\]
that is, \eqref{Id1}.

On the other hand, contracting \eqref{HessH} with $h_{ij}$, we obtain
\begin{equation}\label{hijHessH}
h_{ij}\nabla_i\nabla_jH=|A|^2-H\mathrm{tr}(A^3)+\left\langle x, e_l\right\rangle\nabla_lh_{ij}h_{ij}.
\end{equation}
By \eqref{Sim2} and \eqref{hijHessH} we get
\[
\ \Delta|A|^2=2|\nabla A|^2+2|A|^2-2|A|^4+\left\langle x, \nabla_l |A|^2e_l\right\rangle,
\]
that is, \eqref{Id2}.
\end{proof}
Combining \eqref{Id2} and \eqref{KatoA} the Simon's inequality for $|A|$ on a self--shrinker reads as follows. 
\begin{lemma}\label{SimonLem}(Lemma 10.8 in \cite{CoMi}).
Let $x:M^m\to\mathbb{R}^{m+1}$ be a smoothly immersed self--shrinker in $\mathbb{R}^{m+1}$ then  
\begin{equation}\label{Simon1}
|A|\left[\mathcal{L}+(|A|^2-1)\right]|A|=|\nabla A|^2-|\nabla|A||^2\geq0.
\end{equation}
\end{lemma}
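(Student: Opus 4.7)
The plan is to derive the identity by applying the drift-Laplacian to $|A|^2$ in two different ways and equating the results. First I would establish the product rule $\mathcal{L}(u^2)=2u\,\mathcal{L}u+2|\nabla u|^2$, which follows immediately from the corresponding identity for $\Delta$ together with $\langle x,\nabla(u^2)\rangle=2u\langle x,\nabla u\rangle$. Applying this with $u=|A|$ (on the open set where $|A|>0$, so that $|A|$ is smooth) yields
\[
\mathcal{L}(|A|^2)=2|A|\,\mathcal{L}|A|+2|\nabla|A||^2.
\]

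Next I would substitute identity \eqref{Id2} from Lemma \ref{Equations}, namely $\mathcal{L}|A|^2=2|\nabla A|^2+2|A|^2-2|A|^4$, and solve the resulting equation for $|A|\,\mathcal{L}|A|$. After dividing by $2$ and collecting the $|A|^2$ and $|A|^4$ terms on the left-hand side, I would obtain
\[
|A|\,\mathcal{L}|A|+|A|^2(|A|^2-1)=|\nabla A|^2-|\nabla|A||^2,
\]
which is exactly
\[
|A|\bigl[\mathcal{L}+(|A|^2-1)\bigr]|A|=|\nabla A|^2-|\nabla|A||^2.
\]
The nonnegativity of the right-hand side is then an immediate consequence of Kato's inequality \eqref{KatoA}.

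The only mildly delicate point is that $|A|$ need not be smooth where it vanishes, so strictly speaking the identity is first established on the open set $\{|A|>0\}$; at points where $|A|=0$ the formula is interpreted in the natural way (both sides of the product $|A|\,\mathcal{L}|A|$ and $|A|^2(|A|^2-1)$ vanish), and the inequality on the right is trivially saturated. Beyond this regularity remark the proof is essentially a one-line algebraic manipulation, so I do not anticipate a genuine obstacle.
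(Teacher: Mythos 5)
Your proof is correct and follows exactly the route the paper indicates: it obtains \eqref{Simon1} by combining the identity \eqref{Id2} for $\mathcal{L}|A|^2$ with the chain rule for $\mathcal{L}$ applied to $|A|^2$ and Kato's inequality \eqref{KatoA}. The algebra checks out, and your remark on the regularity of $|A|$ at its zero set is a sensible (if standard) addition.
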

We shall also use the next computations concerning the square norm of the immersion of a self--shrinker; \cite[Lemma 3.20]{CoMi}.
\begin{lemma}
Let $x:M^m\to\mathbb{R}^{m+1}$ be a smoothly immersed self--shrinker in $\mathbb{R}^{m+1}$ then
\begin{eqnarray}
\mathcal{L}|x|^2&=&2(m-|x|^2)\label{MorFor1}\\
\Delta|x|^2&=&2(m-H^2).\label{MorFor2}
\end{eqnarray}
\end{lemma}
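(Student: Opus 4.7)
The plan is a direct computation using the self-shrinker equation $H=\langle x,\nu\rangle$ together with the standard formulas for the gradient and Laplacian of $|x|^2$ on a hypersurface.

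First I would handle $\Delta|x|^2$. Writing $x$ componentwise, the hypersurface Laplacian satisfies $\Delta x=\mathbf{H}=-H\nu$ (the standard fact that the coordinate functions have Laplacian equal to the mean curvature vector); concretely, in a local orthonormal frame $\{e_i\}$ with $\nabla^T_ie_j(p)=0$ at the evaluation point, we have $\nabla_i x=e_i$ so $\nabla_i\nabla_i x=\nabla_i e_i$, whose ambient normal part is $-h_{ii}\nu$ by \eqref{SecFundForm}. Expanding
\begin{equation*}
\Delta|x|^2=2\langle \Delta x,x\rangle+2|\nabla x|^2=2\langle -H\nu,x\rangle+2m,
\end{equation*}
and using $\langle x,\nu\rangle=H$ from \eqref{SS}, yields $\Delta|x|^2=2(m-H^2)$, which is \eqref{MorFor2}.

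Next I would obtain $\mathcal{L}|x|^2$ from the definition \eqref{Operator}. Since $\nabla_i|x|^2=2\langle x,e_i\rangle$, the tangential gradient is $\nabla|x|^2=2x^T$, where $x^T$ denotes the projection of the position vector onto $TM$. Therefore
\begin{equation*}
\langle x,\nabla|x|^2\rangle=2\langle x,x^T\rangle=2|x^T|^2=2\bigl(|x|^2-\langle x,\nu\rangle^2\bigr)=2\bigl(|x|^2-H^2\bigr),
\end{equation*}
again invoking \eqref{SS}. Combining with the formula for $\Delta|x|^2$ just derived,
\begin{equation*}
\mathcal{L}|x|^2=\Delta|x|^2-\langle x,\nabla|x|^2\rangle=2(m-H^2)-2(|x|^2-H^2)=2(m-|x|^2),
\end{equation*}
which is \eqref{MorFor1}.

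There is no real obstacle here: the only non-trivial input is the self-shrinker identity $\langle x,\nu\rangle=H$, which is used twice — once to rewrite $\langle \mathbf{H},x\rangle$ as $-H^2$ in the computation of $\Delta|x|^2$, and once to split $|x|^2$ into its tangential and normal parts when simplifying $\langle x,\nabla|x|^2\rangle$. Everything else is a routine application of $\nabla_i x=e_i$ and the Gauss formula. The only mild care needed is the choice of a frame with $\nabla^T_ie_j(p)=0$ at the point of evaluation, exactly as in the proof of Lemma~\ref{Equations}.
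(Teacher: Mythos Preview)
Your computation is correct and is exactly the standard direct argument: use $\Delta x=-H\nu$ and $|\nabla x|^2=m$ to get \eqref{MorFor2}, then subtract $\langle x,\nabla|x|^2\rangle=2|x^T|^2=2(|x|^2-H^2)$ to obtain \eqref{MorFor1}. The paper does not supply its own proof here but simply cites \cite[Lemma~3.20]{CoMi}, where the identities are derived by the same computation you wrote down.
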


\section{Self-shrinkers as weighted manifolds}
Looking at the  basic formulas we have presented in the previous section we are naturally led to think of a self--shrinker $x:M^m\to\mathbb{R}^{m+1}$ as a weighted manifold $(M^m, e^{-\frac{|x|^{2}}{2}}d\rm{vol})$, whose geometry is governed by analytic properties of the operator $\mathcal{L}$.

We look now at the associate $\infty$--Bakry--Emery Ricci tensor. Letting $f=\frac{|x|^2}{2}$, working at a point $p$, and choosing the frame in such a way that $\nabla^{T}_ie_j(p)=0$, one gets that at $p$
\begin{eqnarray*}
\RRR_{ij}+\nabla_i\nabla_jf&=&\RRR_{ij}+\left\langle \nabla_i x^{T}, e_j\right\rangle+\left\langle x^{T},\nabla_i e_j\right\rangle\\
&=&\RRR_{ij}+\left\langle \nabla_i x, e_j\right\rangle-\left\langle \nabla_i(\left\langle x, \nu\right\rangle\nu), e_j\right\rangle+\left\langle x, \nabla_i e_j\right\rangle-\left\langle x, \nu\right\rangle\left\langle \nu, \nabla_i e_j\right\rangle\\
&=&\RRR_{ij}+\delta_{ij}-\left\langle x, \nu\right\rangle\left\langle \nabla_i\nu, e_j\right\rangle+\left\langle x, \left\langle \nabla_ie_j, \nu\right\rangle \nu\right\rangle-\left\langle x, \nu\right\rangle\left\langle \nu, \nabla_ie_j\right\rangle\\
&=&\RRR_{ij}+\delta_{ij}-\left\langle x, \nu\right\rangle\left\langle \nabla_i \nu, e_j\right\rangle\\
&=&\RRR_{ij}+\delta_{ij}-\left\langle x, \nu\right\rangle h_{ij}.
\end{eqnarray*}
Hence, by the self--shrinker equation \eqref{SS} we get that
\[
\ \left(\Ric_{f}\right)_{ij}=\RRR_{ij}+\delta_{ij}-Hh_{ij},
\]
that is, using \eqref{Ric},
\[
\ \left(\Ric_{f}\right)_{ij}=+\delta_{ij}-h_{il}h_{lj}.
\]
We have thus obtained the following lower bound for the $\infty$--Bakry--Emery Ricci tensor of a self-shrinker,
\begin{equation}\label{BoundRicf}
\Ric_f\geq-(|A|^2-1).
\end{equation}

\section{A Liouville--type theorem}
Some of the results of the next section rely on the following weighted version of a Liouville-type result originally obtained in \cite{PRS-JFA05}, \cite{PRS-Book}; see also \cite{PV1}.
\begin{theorem}\label{Liouville}
Assume that on a complete weighted manifold $(M, e^{-f}d\rm{vol})$ the locally Lipschitz functions $u\geq 0$, $v>0$ satisfy
\begin{equation}\label{ineq_u}
\Delta_fu+a(x)u\geq0
\end{equation}
and
\begin{equation}\label{ineq_deltav}
\Delta_fv+\delta a(x)v\leq0,
\end{equation}
for some constant $\delta\geq1$ and $a(x)\in C^0(M)$. If $u\in L^{2\beta}\left(M,e^{-f}d\mathrm{vol}\right)$, $1\leq\beta\leq \delta$, then there exists a constant $C\geq0$ such that 
\[
\ u^{\delta}=Cv.
\]
Furthermore,
\begin{enumerate}
	\item [(i)]If $\delta>1$ then $u$ is constant on $M$ and either $a\equiv 0$ or $u\equiv 0$.
	\item [(ii)]If $\delta=1$ and $u\not\equiv0$, $v$ and therefore $u^\delta$ satisfy \eqref{ineq_deltav} with equality sign.
\end{enumerate}
\end{theorem}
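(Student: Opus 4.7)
The plan is to first establish the identity $u^\delta = Cv$, from which conclusions (i) and (ii) will follow by direct chain-rule computations. Two preliminary reductions let one work in the simplest setting. Since $\beta \geq 1$, the function $\tilde u := u^\beta$ is locally Lipschitz, satisfies $\Delta_f \tilde u + (\beta a)\tilde u \geq 0$ weakly (the chain rule produces an extra non-negative term $\beta(\beta-1)u^{\beta-2}|\nabla u|^2$ which we simply discard), and lies in $L^2(M, e^{-f}d\mathrm{vol})$. Dually, setting $w := v^{1/\delta} > 0$, a chain-rule rewriting of \eqref{ineq_deltav} yields
\[
\Delta_f w + a w \;\leq\; -(\delta-1)\frac{|\nabla w|^2}{w} \;\leq\; 0.
\]
Replacing $(u,v,a,\delta)$ by $(\tilde u, w, \beta a, 1)$ thus reduces everything to the case $\beta = \delta = 1$; the conclusion $\tilde u = K w$ of the reduced case unwinds as $u^\delta = K^\delta v$.

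For the reduced problem I consider $\phi := \tilde u / w \geq 0$. The standard divergence-form identity
\[
\mathrm{div}\bigl(w^2 e^{-f}\nabla\phi\bigr) \;=\; e^{-f}\bigl(w\,\Delta_f\tilde u - \tilde u\,\Delta_f w\bigr),
\]
combined with $\Delta_f\tilde u \geq -(\beta a)\tilde u$ and $-\Delta_f w \geq (\beta a)w$ in the weak sense, shows that $\phi$ is weakly $(w^2 e^{-f})$-subharmonic. I then run a Caccioppoli-type cutoff argument: pick a Lipschitz cutoff $\psi_R$ equal to $1$ on the geodesic ball $B_R$, supported in $B_{2R}$, with $|\nabla\psi_R| \leq 2/R$. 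Testing the subharmonic inequality against $\phi\psi_R^2$ and absorbing the cross term via Cauchy--Schwarz yields
\[
\int_M w^2 e^{-f}\psi_R^2|\nabla\phi|^2 \;\leq\; \frac{K}{R^2}\int_M \tilde u^2 e^{-f}\, d\mathrm{vol},
\]
where the crucial cancellation $w^2\phi^2 = \tilde u^2$ removes the unknown weight $w$ from the right-hand side. Letting $R\to\infty$ and invoking monotone convergence forces $\nabla\phi \equiv 0$, so $\phi$ is a non-negative constant and $u^\delta = Cv$.

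The cases (i) and (ii) now follow from the explicit relation. If $C = 0$ then $u \equiv 0$ and both conclusions are trivial; otherwise $C > 0$, $u = (Cv)^{1/\delta} > 0$ everywhere, and differentiating twice yields
\[
\Delta_f u + a u \;=\; \frac{u}{\delta v}\bigl(\Delta_f v + \delta a v\bigr) + \frac{1-\delta}{\delta^2}\,\frac{u\,|\nabla v|^2}{v^2}.
\]
When $\delta > 1$ both summands on the right are non-positive while the left-hand side is non-negative; both must vanish, giving $|\nabla v|\equiv 0$, so $v$ and $u$ are constant and the two inequalities reduce to $au\geq 0$ and $\delta av \leq 0$, forcing $a \equiv 0$ (case (i)). When $\delta = 1$, $u = Cv$ immediately gives $\Delta_f u = C\Delta_f v$, whose substitution into the two inequalities collapses them to equalities (case (ii)). The main technical point to guard against is the possible non-smoothness of $\tilde u = u^\beta$ on the zero set of $u$ for non-integer $\beta \in (1,2)$; this is handled by the standard regularization $u_\ep := (u^2 + \ep^2)^{1/2} - \ep$ and passing to the limit in the weak formulation of the integrations by parts.
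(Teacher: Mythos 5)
Your overall strategy coincides with the paper's: form a quotient of powers of $u$ and $v$, show it is subharmonic for a conformally modified weight, and kill its gradient using the $L^2$ bound (the paper outsources this last step to the $L^2$--Liouville theorem for weighted subharmonic functions, which you instead reprove by a Caccioppoli cutoff argument; that part is correct). Your derivation of (i) and (ii) from $u^{\delta}=Cv$ is also correct and equivalent to the paper's.

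There is, however, a genuine error in the exponent chosen for the reduction. You set $\tilde u=u^{\beta}$ and $w=v^{1/\delta}$. The chain rule gives $\Delta_f\tilde u+\beta a\,\tilde u\geq 0$ but only $\Delta_f w+a\,w\leq 0$; the zeroth--order coefficients do not match. Your claim that $-\Delta_f w\geq(\beta a)w$ is false in general: all one gets is $-\Delta_f w\geq aw+(\delta-1)|\nabla w|^2/w$, which does not dominate $\beta a w$ at a point where $a>0$ and $\nabla v=0$. Consequently
\[
w\,\Delta_f\tilde u-\tilde u\,\Delta_f w\;\geq\;(1-\beta)\,a\,\tilde u\,w,
\]
which has no sign when $\beta>1$ and $a$ takes positive values, so $\phi=\tilde u/w$ need not be $(w^{2}e^{-f})$--subharmonic and the Caccioppoli step collapses. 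The same slip surfaces in the unwinding: $u^{\beta}=Kv^{1/\delta}$ gives $u^{\beta\delta}=K^{\delta}v$, not $u^{\delta}=K^{\delta}v$. The fix is precisely the paper's choice of exponent $\alpha=\beta/\delta$ in Lemma \ref{computation}: take $w=v^{\beta/\delta}$ (note $\beta/\delta\leq 1$, so the extra chain--rule term $\tfrac{\beta}{\delta}(\tfrac{\beta}{\delta}-1)v^{\beta/\delta-2}|\nabla v|^{2}$ still has the favourable sign), which yields $\Delta_f w+\beta a\,w\leq 0$, matches the coefficients, preserves the norm identity $w^{2}\phi^{2}=u^{2\beta}$, and unwinds $u^{\beta}=Kv^{\beta/\delta}$ correctly to $u^{\delta}=K^{\delta/\beta}v$. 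With this correction your argument is complete; as written it is only valid for $\beta=1$.
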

We shall use a number of straightforward computations that, for the sake of clarity, we isolate in the following
\begin{lemma}\label{computation}
Suppose we are given $u,v,f:M\rightarrow\mathbb{R}$ with $v>0$. Having fixed $\alpha,\beta>0$ set%
\[
h=-\log v^{2\alpha}+f.
\]
Then, the following identity holds%
\[
\Delta_{h}\left(  \frac{u^{\beta}}{v^{\alpha}}\right)  =\frac{u^{\beta-1}%
}{v^{\alpha+1}}\left\{  \beta v\Delta_{f}u-\alpha u\Delta_{f}v+\beta\left(
\beta-1\right)  v\frac{\left\vert \nabla u\right\vert ^{2}}{u}-\alpha\left(
\alpha-1\right)  u\frac{\left\vert \nabla v\right\vert ^{2}}{v}\right\}  .
\]
Moreover%
\[
\left\Vert \frac{u^{\beta}}{v^{\alpha}}\right\Vert _{L^{2}\left(
M,e^{-h}d\mathrm{vol}\right)  }=\left\Vert u^{\beta}\right\Vert _{L^{2}\left(
M,e^{-f}d\mathrm{vol}\right)  }.
\]

\end{lemma}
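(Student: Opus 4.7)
The statement is essentially a book-keeping identity, so the plan is to carry out a direct computation and keep careful track of the cancellations. I would organize the work in four short steps.

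\textbf{Step 1: the norm identity.} This is immediate once one notices that $e^{-h}=v^{2\alpha}e^{-f}$. Then
\[
\left\Vert\frac{u^\beta}{v^\alpha}\right\Vert^2_{L^2(M,e^{-h}d\mathrm{vol})}=\int_M \frac{u^{2\beta}}{v^{2\alpha}}\,v^{2\alpha}e^{-f}\,d\mathrm{vol}=\int_M u^{2\beta}\,e^{-f}\,d\mathrm{vol},
\]
so the weight $v^{2\alpha}$ built into $h$ exactly absorbs the denominator.

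\textbf{Step 2: compute $\Delta(u^\beta/v^\alpha)$.} Writing $w=u^\beta v^{-\alpha}$ and expanding by the product and chain rules,
\[
\nabla w=\beta u^{\beta-1}v^{-\alpha}\nabla u-\alpha u^\beta v^{-\alpha-1}\nabla v,
\]
and, using $\Delta(fg)=g\Delta f+2\langle\nabla f,\nabla g\rangle+f\Delta g$ together with $\Delta(u^\beta)=\beta u^{\beta-1}\Delta u+\beta(\beta-1)u^{\beta-2}|\nabla u|^2$ and $\Delta(v^{-\alpha})=-\alpha v^{-\alpha-1}\Delta v+\alpha(\alpha+1)v^{-\alpha-2}|\nabla v|^2$, I would obtain a sum of six terms; the mixed-gradient term comes with the coefficient $-2\alpha\beta\,u^{\beta-1}v^{-\alpha-1}$.

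\textbf{Step 3: compute $\langle\nabla h,\nabla w\rangle$.} Since $\nabla h=-2\alpha v^{-1}\nabla v+\nabla f$, dotting with $\nabla w$ produces four terms. The key point is that the term $+2\alpha\beta\,u^{\beta-1}v^{-\alpha-1}\langle\nabla u,\nabla v\rangle$ in $\langle\nabla h,\nabla w\rangle$ cancels the mixed-gradient term from Step 2, and the $|\nabla v|^2$ coefficient is shifted from $\alpha(\alpha+1)$ to $\alpha(\alpha+1)-2\alpha^2=-\alpha(\alpha-1)$.

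\textbf{Step 4: assemble $\Delta_h w=\Delta w-\langle\nabla h,\nabla w\rangle$.} After the cancellation in Step 3, the remaining terms containing $\nabla f$ combine with $\Delta u$ and $\Delta v$ to produce $\Delta_f u=\Delta u-\langle\nabla f,\nabla u\rangle$ and $\Delta_f v=\Delta v-\langle\nabla f,\nabla v\rangle$. Factoring out $u^{\beta-1}v^{-\alpha-1}$ yields exactly
\[
\Delta_h w=\frac{u^{\beta-1}}{v^{\alpha+1}}\left\{\beta v\Delta_f u-\alpha u\Delta_f v+\beta(\beta-1)v\frac{|\nabla u|^2}{u}-\alpha(\alpha-1)u\frac{|\nabla v|^2}{v}\right\}.
\]

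There is no real obstacle here: the argument is a careful but routine algebraic manipulation, and the only thing to watch is the sign/coefficient of the $|\nabla v|^2$ term, where the shift from $\alpha(\alpha+1)$ to $-\alpha(\alpha-1)$ produced by subtracting $\langle\nabla h,\nabla w\rangle$ is what makes the final formula symmetric between $u$ and $v$.
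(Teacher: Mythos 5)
Your computation is correct and is precisely the direct verification that the paper leaves implicit (the lemma is stated there without proof, introduced only as a collection of ``straightforward computations''): the weight $v^{2\alpha}$ in $e^{-h}=v^{2\alpha}e^{-f}$ absorbs the denominator for the norm identity, the mixed-gradient terms cancel, and the $|\nabla v|^{2}$ coefficient shifts from $\alpha(\alpha+1)$ to $\alpha(\alpha+1)-2\alpha^{2}=-\alpha(\alpha-1)$, exactly as you say. One cosmetic slip in Step 3: the mixed term inside $\langle\nabla h,\nabla w\rangle$ itself is $-2\alpha\beta\,u^{\beta-1}v^{-\alpha-1}\langle\nabla u,\nabla v\rangle$, and it is only after subtracting $\langle\nabla h,\nabla w\rangle$ in $\Delta_{h}w=\Delta w-\langle\nabla h,\nabla w\rangle$ that its contribution becomes $+2\alpha\beta$ and cancels the term from Step 2 --- your conclusion is unaffected.
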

\begin{proof}(of Theorem \ref{Liouville})
We follow the arguments in Theorem 1.4 of \cite{PRS-JFA05}, and Theorem 4.5 of \cite{PRS-Book}. Applying Lemma \ref{computation} with $\beta\geq1$ and $\alpha=\frac{\beta}{\delta}$, we have that
\begin{align*}
\Delta_{h}\left(  \frac{u^{\beta}}{v^{\alpha}}\right)  &\geq\frac{u^{\beta-1}%
}{v^{\alpha+1}}\left\{  \beta v\Delta_{f}u-\alpha u\Delta_{f}v\right\}\\
&\geq\frac{u^{\beta-1}}{v^{\alpha+1}}\left\{\alpha \delta auv-\beta auv\right\},
\end{align*}
whence,
\[
\ \Delta_h\left(\frac{u^{\beta}}{v^{\alpha}}\right)\geq 0.
\]
Since, again by Lemma \ref{computation},
\[
\left\Vert \frac{u^{\beta}}{v^{\alpha}}\right\Vert _{L^{2}\left(
M,e^{-h}d\mathrm{vol}\right)  }=\left\Vert u^{\beta}\right\Vert _{L^{2}\left(
M,e^{-f}d\mathrm{vol}\right)  },
\] 
where $h=-\log v^{2\alpha}+f$, applying to $u^{\beta}/v^{\alpha
}$ the $L^{2}$-Liouville theorem for non-negative $\Delta_{h}%
$--subharmonic functions, see \cite{PRS-JFA05}, \cite{PRS-Book}, we can conclude that%
\[
\frac{u^{\beta}}{v^{\alpha}}=\mathrm{const}.
\]
Equivalently, there exist a constant $C\geq0$ such that
\[
\ u^{\delta}=Cv.
\]
If we now assume that $u\not\equiv 0$ multiplying by a suitable constant we may suppose that $u$ is strictly positive, and
\[
\ u^{\delta}=v. 
\]
Inserting this latter into \eqref{ineq_deltav} and using \eqref{ineq_u}, we deduce
\begin{align}\label{ineq_u^delta}
0\geq\Delta_fu^\delta+\delta au^\delta&=\delta u^{\delta-2}((\delta-1)\left|\nabla u\right|^2+u\Delta_f u+u^2a)\\
&\geq \delta(\delta-1)u^{\delta-2}|\nabla u|^{2}.\nonumber
\end{align}
Thus, if $\delta>1$, $|\nabla u|^2\equiv0$ proving that $u$ and therefore $v$ are constant. It follows from \eqref{ineq_deltav} that
\[
\ \Delta_f v+\delta av=\delta av\leq0,
\] 
so that $a\leq 0$, while, from \eqref{ineq_u},
\[
\ 0\leq\Delta_f u+ au=au
\]
and we conclude that $a\equiv0$.\\
Finally, assume that $\delta=1$. Then according to \eqref{ineq_u^delta}
\begin{equation}\label{eq_u^delta}
\Delta_fu^\delta+\delta au^\delta=0
\end{equation}
i.e $v=u^\delta$ satisfies \eqref{ineq_deltav} with equality sign.
\end{proof}

\section{Proof of the classification results}

We are now in the position to prove the main result of the paper.

\begin{proof}(of Theorem \ref{LiouvilleAppl})
Since $H\geq 0$ and it satisfies \eqref{Id1} note that by the minimum principle either $H\equiv0$ or $H>0$. In case $H\equiv 0$, the self--shrinking equation \eqref{SS} implies that $M^m$ is a hyperplane through the origin. Therefore, assume by now that $H>0$.

By equations \eqref{Id1} and \eqref{Simon1} we can apply Theorem \ref{Liouville} with the choiches $u=|A|$, $v=H$, $a(x)=|A|^2-1$ and $\delta=\beta=1$ to deduce that 
\begin{equation}\label{Key1}
|A|=CH,
\end{equation} 
for some constant $C\geq 0$ and that either $|A|\equiv 0$, and $M^m$ is necessarily a hyperplane through the origin, or 
\[
\ |\nabla A|^2=|\nabla|A||^2.
\]
These are the key geometric identities to conlude the classification as in \cite{Hu3}, \cite{CoMi}.
\end{proof}

\begin{remark}
\rm{Note that, even though equation \eqref{Key1} is proved also in \cite{CoMi}, it is also used there the fact that, in case of polynomial volume growth and $H>0$, $|A|^2,\,|\nabla A|, |\nabla|A||\in L^2(e^{-\frac{|x|^2}{2}}d\rm{vol})$. This is a reason of interest in the approach we propose since we clarify what is really needed to get the conclusion in \cite{CoMi}.}
\end{remark}

Recall that a weighted manifold $(M, g, e^{-f}d\rm{vol})$ is said to be $f$--parabolic if every solution of $\Delta_f u\geq 0$ satisfying $u^{*}=\sup_Mu<+\infty$ must be identically constant. It can be shown that a sufficient condition for $(M, g, e^{-f}d\rm{vol})$ to be $f$--parabolic is that $M$ is geodesically complete and
\begin{equation}\label{f-par}
\mathrm{vol}_f(\partial B_r)^{-1}=\left(\int_{\partial B_r}e^{-f}d\mathrm{vol}_{m-1}\right)^{-1}\notin L^{1}(+\infty),
\end{equation}
where $d\mathrm{vol}_{m-1}$ stands for the $(m-1)$--Hausdorff measure.
\begin{remark}\label{Condf-par}
\rm{
By the Qian--Wei--Wylie weighted volume estimates, \cite{Q1, WW}, and Proposition 4.3 in \cite{PRRS} we hence obtain the $f$--parabolicity of weighted manifolds $(M, \left\langle \,,\,\right\rangle, e^{-f}d\rm{vol})$ satisfying one of the following curvature assumptions
\begin{itemize}
	\item [(a)]$Ric_f\geq \epsilon>0$, \quad $\epsilon$ constant;
	\item [(b)]$Ric_f\geq D(1+r)^{-\mu}$\quad with $D>0$ and $0\leq\mu\leq1$.
\end{itemize}
Note that (a) actually implies the stronger condition $\mathrm{vol}_f(M)<+\infty$.
}
\end{remark}
\begin{remark}
\rm{
A properly immersed self--shrinker $x:M^m\to\mathbb{R}^{m+1}$ is $f$--parabolic with $f=\frac{|x|^2}{2}$. Indeed, by \eqref{MorFor1}, $|x|^2$ is a positive, proper function satisfying $\mathcal{L}|x|^2\leq0$ outside a compact set and the standard Khas'minskii criterion of parabolicity applies. The same conclusion could be obtained in a more involved way using a nice result by Q. Ding and Y. L. Xin, \cite{QDi}; see also \cite{ChZo}.

Similarly, using \eqref{MorFor2} and the fact that $|x|\geq |\left\langle x,\nu\right\rangle|=|H|$, we see that a self--shrinker satisfying $|H|(x)\to\infty$ as $x\to\infty$ is parabolic with respect to the Laplace--Beltrami operator. The same conclusion can be reached if we consider a properly immersed self--shrinker such that $|H|\geq\sqrt{m}$ outside a compact set.}
\end{remark}
Using this property we can now give a straightforward proof of the classification result by H. D. Cao and H. Li stated in Theorem \ref{ThCaoLi}.

\begin{proof}(of Theorem \ref{ThCaoLi})
Since $M$ has polynomial volume growth, we immediately obtain that $M$ satisfies $\mathrm{vol}_{\frac{|x|^2}{2}}(M)<+\infty$. In particular we have that $M$ is $\frac{|x|^2}{2}$\,--parabolic. Using the fact that $|A|^2\leq 1$ in \eqref{Simon1} we obtain that $|A|$ is a bounded $\frac{|x|^2}{2}$\,--\,subharmonic function, therefore it has to be constant. Substituing again in \eqref{Simon1} we get $|\nabla A|^2=|\nabla|A||^2$ and hence that $|\nabla A|=0$. According to a theorem of Lawson, \cite{La}, we conclude the desired classification.  
\end{proof}
Similarly, using condition (b) in Remark \ref{Condf-par}, we can prove Proposition \ref{ImpChengPeng}.
\begin{proof}(of Proposition \ref{ImpChengPeng})
By \eqref{BoundRicf} and Remark \ref{Condf-par} we know that $M$ is $\frac{|x|^2}{2}$\,--parabolic and by \eqref{Simon1} we conclude that $|A|$ is constant. Moreover by \eqref{Simon1} and  \eqref{Contr4f-par} we necessarily have $|A|\equiv 0$, that is, $M$ is a hyperplane in $\mathbb{R}^{m+1}$.
\end{proof}

\begin{acknowledgement*}
The author is deeply grateful to Stefano Pigola for useful conversations during the preparation of the manuscript.
\end{acknowledgement*}

\bibliographystyle{amsplain}
\bibliography{ClassResSelfShrink}

\providecommand{\bysame}{\leavevmode\hbox to3em{\hrulefill}\thinspace}
\providecommand{\MR}{\relax\ifhmode\unskip\space\fi MR }
\providecommand{\MRhref}[2]{%
  \href{http://www.ams.org/mathscinet-getitem?mr=#1}{#2}
}
\providecommand{\href}[2]{#2}
\begin{thebibliography}{10}

\bibitem{AL}
U.~Abresch and J.~Langer, \emph{The normalized curve shortening flow and
  homothetic solutions}, J. Differential Geom. \textbf{23} (1986), no.~2,
  175--196.

\bibitem{CL}
H.-D. Cao and H.~Li, \emph{A gap theorem for self--shrinkers of the mean
  curvature flow in arbitrary codimension. \emph{{T}o appear on {C}alc. {V}ar.
  {P}artial {D}ifferential {E}quations. Online First: DOI:
  10.1007/s00526-012-0508-1 }}, arXiv:1101.0516v2 (2011).

\bibitem{CP}
Q.-M. Cheng and Y.~Peng, \emph{Complete self--shrinkers of the mean curvature
  flow}, arXiv:1202.1053v3 (2012).

\bibitem{ChZo}
X.~Cheng and D.~Zhou, \emph{Volume estimates about shrinkers},
  arXiv:1106.4950v1 (2011).

\bibitem{CoMi}
T.~H. Colding and W.~P.~Minicozzi II, \emph{Generic mean curvature flow {I};
  generic singularities}, Ann. of Math \textbf{175} (2012), no.~2, 755--833.

\bibitem{QDi}
Q.~Ding and Y.L. Xin, \emph{Volume growth, eigenvalue and compactness for
  self--shrinkers}, arXiv:1101.1411v1 (2011).

\bibitem{Hu3}
G.~Huisken, \emph{Local and global behaviour of hypersurfaces moving by mean
  curvature}, Differential geometry: partial differential equations on
  manifolds ({L}os {A}ngeles, {CA}, 1990), Proc. Sympos. Pure Math., vol.~54,
  pp.~175--191.

\bibitem{Hu1}
\bysame, \emph{Flow by mean curvature of convex surfaces into spheres}, J.
  Differential Geom. \textbf{20} (1984), no.~1, 237--266.

\bibitem{Hu2}
\bysame, \emph{Asymptotic behavior for singularities of the mean curvature
  flow}, J. Differential Geom. \textbf{31} (1990), no.~1, 285--299.

\bibitem{La}
H.~B. Lawson, \emph{Local rigidity theorems for minimal hypersurfaces}, Ann. of
  Math. (2) \textbf{89} (1969), 187--197.

\bibitem{LeSe}
N.~Q. Le and N.~Sesum, \emph{Blow-up rate of the mean curvature during the mean
  curvature flow and a gap theorem for self-shrinkers}, Comm. Anal. Geom.
  \textbf{19} (2011), no.~4, 1--27.

\bibitem{PRRS}
S.~Pigola, M.~Rigoli, M.~Rimoldi, and A.G. Setti, \emph{Ricci almost
  solitons.}, Ann. Sc. Norm. Super. Pisa Cl. Sci. (5) \textbf{10} (2011),
  no.~4, 757--799.

\bibitem{PRS-JFA05}
S.~Pigola, M.~Rigoli, and A.~G. Setti, \emph{Vanishing theorems on {R}iemannian
  manifolds, and geometric applications}, J. Funct. Anal. \textbf{229} (2005),
  no.~2, 424--461.

\bibitem{PRS-Book}
\bysame, \emph{Vanishing and finiteness results in geometric analysis},
  Progress in Mathematics, vol. 266, Birkh\"auser Verlag, Basel, 2008, A
  generalization of the {B}ochner technique.

\bibitem{PV1}
S.~Pigola and G.~Veronelli, \emph{Remarks on ${L}^{p}$-vanishing results in
  geometric analysis.}, Int. J. Math. \textbf{23} (2012).

\bibitem{Q1}
Z.~Qian, \emph{On conservation of probability and the {F}eller property}, Ann.
  Probab. \textbf{24} (1996), no.~1, 280--292.

\bibitem{WW}
G.~Wei and W.~Wylie, \emph{Comparison geometry for the {B}akry-{E}mery {R}icci
  tensor}, J. Differential Geom. \textbf{83} (2009), no.~2, 377--405.

\end{thebibliography}
\end{document}